\newtheorem{thm}{Theorem}
\newtheorem{lem}{Lemma}
\newtheorem{cor}{Corollary}
\def\C{\mathds{C} }
\def\tendsto{\longrightarrow}
\def\ind{\mathrm{ind}}
\def\defem#1{{\sl#1}}
\begin{document}

	\begin{abstract}
	Given a convergent sequence of nodes we present a one-dimensional-holomorphic-function
	version of the Newton interpolation method of polynomials.
	It also generalises the Taylor and the Laurent formula. In other words,
	we present an effective identity theorem for complex domains.	\end{abstract}

\title{A note on interpolation series in the complex domain}
\author[T. Sobieszek]{Tomasz Sobieszek}
\email{sobieszek@math.uni.lodz.pl}
\urladdr{http://sobieszek.co.cc}
\address{
		Faculty of Mathematics and Computer Science\\
		University of Łódź\\
		ul. Banacha 22, 90-238 Łódź \\
		Poland
}

\keywords{interpolation series, Newton interpolation, Taylor series, Laurent series, identity theorem}
\subjclass{Primary 30E05,  secondary 41A58}

\maketitle

\section {Introduction}

We all know well that there is exactly one polynomial of degree at most $n$ that takes given values at
a given set of $n+1$ distinct points $c_1, \ldots, c_{n+1}$. Moreover, this polynomial is given by the following Newton interpolation formula, (see~\cite{Davis},  Sec.~2.6)
\[
	P(x) = P(c_1) + \Delta^1 P(c_1,c_2)(z-c_1) + \ldots +\Delta^n P(c_1, \ldots,c_{n+1}) (z-c_1)\cdots (z-c_n).
\]
The so-called \defem{devided differences} $\Delta^n$ are given by 
\[
	\Delta^1 P(c_1,c_2)= \tfrac{P(c_2)-P(c_1)}{c_2-c_1} , \quad\Delta^1 P(c_1,c_2,c_3)= \tfrac{P(c_1,c_3)-P(c_1,c_2)}{c_3-c_2},
\]
and so on, or equivalently
\[
\Delta ^{n-1} P(c_1,\ldots ,c_n) =
		{\sum _{1\le k\le n} {P(c_k) \over (c_k-c_1)\ldots \widehat{(c_k-c_k)}\ldots (c_k-c_n)}}.
\]

An analogical infinite series for a given infinit set of points $c_n$ is known as the \defem{interpolation series}, (see~\cite{Norlund}).

We generalise the polynomial interpolation formula to give an interpolation series for a holomorphic function, thus establishing an effective identity theorem. In our approach the points $c_i$ don't need to be distinct. In this way we also obtain a generalisation of Taylor or Laurant formula for series expansion of a holomorphic function. 

Two more remarks are due. This result is probably known, but the author is not aware of it. The proofs are simple but somehow neat.

\section {The result}

		\begin{thm}	\label{thm:main}
	Consider two open discs $D_1 ,D_2 \subset \C $ such that $\overline{D_1 } \subset  D_2 $ with centres $c$ and $d$,
	the curves $\Gamma _1 $ and $\Gamma _2 $ which go once around the respective discs in
	the positive direction and a holomorphic function $f: G \tendsto  \C $, where
	$G \supset  \overline{D_2 }\setminus D_1 $.
	Consider also sequences of complex numbers $(c_n)$ and $(d_n)$ convergent
	respectively to $c$ and $d$ and omitting $|\Gamma _2 |$. Then the function $f$ has
	the following representation as the sum of locally uniformly absolutely
	convergent series:
	\[
		f(z) = \sum _{n\ge 1} a_{-n} \, { 1 \over (z-d_1) \ldots  (z-d_n)} 
		\; + \;  \sum _{n \ge 0} a_n \, (z-c_1 ) \ldots  (z-c_n),	\quad (z \in  D_2 \setminus \overline {D_1 })
	\]
	where
	\[
		a_{-n} = {1 \over 2\pi i}\int _{\Gamma _1 } f(\xi ) (\xi -d_1 ) \ldots  (\xi -d_{n-1})  \, d\xi 	\qquad (n\ge 1)
	\]
	and
	\[
		a_n = {1 \over 2\pi i}\int _{\Gamma _2 } {f(\xi ) \over (\xi -c_1 ) \ldots  (\xi -c_{n+1}) } \, d\xi .	\qquad (n\ge 0)
	\]
	\end{thm}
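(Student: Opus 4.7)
The plan is to start from Cauchy's formula on the annulus bounded by $\Gamma_1$ and $\Gamma_2$, expand the kernel $1/(\xi-z)$ by two Newton-type telescoping identities — one driven by $(c_n)$ for the outer integral and one by $(d_n)$ for the inner — and then show that the two remainder integrals vanish in the limit, locally uniformly in $z$.

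Since $f$ is holomorphic on a neighbourhood of $\overline{D_2}\setminus D_1$, Cauchy's formula for the annulus gives, for every $z\in D_2\setminus\overline{D_1}$,
\[
f(z) \;=\; \frac{1}{2\pi i}\int_{\Gamma_2}\frac{f(\xi)}{\xi-z}\,d\xi \;-\; \frac{1}{2\pi i}\int_{\Gamma_1}\frac{f(\xi)}{\xi-z}\,d\xi.
\]
The algebraic core of the argument is the pair of identities, each valid for $N\ge 1$ and verified by induction on $N$ starting from a one-line common-denominator base case:
\[
\frac{1}{\xi-z} \;=\; \sum_{k=0}^{N-1}\frac{(z-c_1)\cdots(z-c_k)}{(\xi-c_1)\cdots(\xi-c_{k+1})} \;+\; \frac{(z-c_1)\cdots(z-c_N)}{(\xi-c_1)\cdots(\xi-c_N)(\xi-z)},
\]
\[
\frac{1}{z-\xi} \;=\; \sum_{k=1}^{N}\frac{(\xi-d_1)\cdots(\xi-d_{k-1})}{(z-d_1)\cdots(z-d_k)} \;+\; \frac{(\xi-d_1)\cdots(\xi-d_N)}{(z-d_1)\cdots(z-d_N)(z-\xi)}.
\]
Inserting the first identity in the $\Gamma_2$ integral and the second in the $\Gamma_1$ integral (after rewriting $-1/(\xi-z)=1/(z-\xi)$) and swapping finite sum with integral produces precisely the $N$-th partial sums of the two series, with coefficients matching the announced formulas for $a_n$ and $a_{-n}$, together with two explicit remainder integrals.

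The main obstacle, as I see it, is to show that these remainders tend to zero locally uniformly on $D_2\setminus\overline{D_1}$. Fix a compact $K$ in the annulus. For the $\Gamma_2$ remainder, the factor $1/(\xi-z)$ is bounded on $K\times|\Gamma_2|$, reducing the estimate to uniform control of the product $\prod_{k=1}^{N}|z-c_k|/|\xi-c_k|$. Since $c_k\to c$ these ratios converge uniformly to $|z-c|/|\xi-c|$; the geometric setup — $c$ lying inside $D_1$, $K$ compactly inside the annulus, and $|\Gamma_2|$ sitting on the outer boundary — yields a bound $|z-c|/|\xi-c|\le r<1$ on $K\times|\Gamma_2|$, so the product decays geometrically and the remainder vanishes uniformly on $K$. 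A dual argument, using $d_k\to d$ together with an inequality $|\xi-d|/|z-d|\le r'<1$ on $K\times|\Gamma_1|$, disposes of the $\Gamma_1$ remainder.

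Finally, the same geometric majorants dominate the two series term by term on $K$, giving absolute and locally uniform convergence; passing to the limit $N\to\infty$ in the Cauchy decomposition then identifies $f(z)$ with the sum of the two series, which completes the proof.
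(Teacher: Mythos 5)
Your strategy coincides with the paper's: Cauchy's theorem for the cycle $-\Gamma_1+\Gamma_2$, a telescoping Newton expansion of the kernel along each node sequence, and an interchange of sum and integral. Your two finite identities with explicit remainders are exactly the partial-sum identity that the paper isolates as Lemma~1 (take $x=\xi-z$, $\lambda_k=\xi-c_k$ for the outer integral and $x=z-\xi$, $\lambda_k=z-d_k$ for the inner one), so structurally nothing is new or missing.

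The step that does not hold up is the geometric estimate --- and it is precisely the step the paper itself dismisses with the word ``clearly''. You claim that $|z-c|/|\xi-c|\le r<1$ on $K\times|\Gamma_2|$ follows from ``$c$ lying inside $D_1$''. It does not: you need $\sup_{z\in K}|z-c|<\inf_{\xi\in\Gamma_2}|\xi-c|=\operatorname{dist}(c,\Gamma_2)$, and if $c$ is a point of $D_1$ while $D_1$ sits off-centre in $D_2$, a compact $K$ hugging the far side of $\Gamma_2$ violates this (take $D_2$ the unit disc centred at $0$, $c=1/2$, $z$ near $-1$, $\xi=1$: the ratio is close to $3$). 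This is not a repairable slip in the estimate: for $f(\xi)=1/(\xi-w)$ with $w$ just outside $\overline{D_2}$ one computes $a_n=-1/\bigl((w-c_1)\cdots(w-c_{n+1})\bigr)$, and the regular series genuinely diverges at such $z$. The bound you want holds exactly when $c$ is the \emph{centre of the outer disc} $D_2$, for then $|\xi-c|=R_2$ on $\Gamma_2$ while $\sup_{z\in K}|z-c|<R_2$; dually, your inequality $|\xi-d|/|z-d|\le r'<1$ on $K\times|\Gamma_1|$ requires $d$ to be the \emph{centre of the inner disc} $D_1$ (if $d$ were the centre of $D_2$ and lay in the annulus, any $K$ containing $d$ defeats it). So the theorem's phrase ``with centres $c$ and $d$'' must be read as attaching $c$ to $D_2$ and $d$ to $D_1$ --- the reading forced by the Laurent special case and by the way Corollary~1 invokes the theorem --- and your proof has to state and use this. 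With that correction, the tails of your products are dominated by geometric sequences (the finitely many factors with ratio possibly exceeding $1$ are merely bounded and harmless), and the remainder of your argument goes through.
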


Before we embark on the proof let us first consider the following

		\begin{lem}
	Given a sequence $(\lambda _n)$  of complex numbers and a nonzero complex $x$
	the equality
	\[
		{1 \over x} = \sum _{n\ge 0}  {(\lambda _1  - x)\ldots (\lambda _n - x)	\over
					\lambda _1 \ldots \lambda _n \, \lambda _{n+1}}
	\]
	is satisfied if and only if
	\[
		\prod _{n\ge 1} {\lambda _n - x	\over
				\lambda _n} = 0.
	\]
	This is true for instance for any sequence $(\lambda _n)$  of complex numbers
	such that $| {\lambda _n - x	\over \lambda _n} | < \theta  < 1$ for sufficiently large $n$-s, and then
	the above series is absolutely convergent. \end{lem}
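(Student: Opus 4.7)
The plan is to express the partial sums of the series in closed form via a telescoping argument; from this both the equivalence and the absolute convergence under the stronger hypothesis will follow. I would first introduce the partial products
\[
P_0 := 1, \qquad P_n := \prod_{k=1}^{n} \frac{\lambda_k - x}{\lambda_k} \quad (n\geq 1),
\]
so that the $n$-th term of the series displayed in the lemma is exactly $P_n/\lambda_{n+1}$.

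The key identity to spot is $(\lambda_{n+1}-x)/\lambda_{n+1} = 1 - x/\lambda_{n+1}$, which yields
\[
P_{n+1} = P_n\Bigl(1 - \frac{x}{\lambda_{n+1}}\Bigr) = P_n - \frac{x\,P_n}{\lambda_{n+1}},
\]
that is, $P_n/\lambda_{n+1} = (P_n - P_{n+1})/x$. Summing this from $n=0$ to $N$, the series telescopes to $(1-P_{N+1})/x$, so the sum equals $1/x$ if and only if $P_{N+1}\to 0$, which by the definition of infinite product means exactly $\prod_{n\geq 1}(\lambda_n - x)/\lambda_n = 0$. This disposes of the equivalence.

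For the final assertion, under the hypothesis $|(\lambda_n - x)/\lambda_n| < \theta < 1$ for all $n \geq n_0$, the moduli $|P_n|$ are bounded by a constant times $\theta^{n-n_0}$, so $P_n\to 0$ and the product vanishes; the same bound also gives $|x/\lambda_n| \leq 1 + \theta$, so $1/|\lambda_n|$ is uniformly bounded and $\sum_n |P_n/\lambda_{n+1}|$ is dominated by a geometric series. There is no real obstacle here: the whole proof rests on recognising the telescoping substitution $P_n/\lambda_{n+1} = (P_n - P_{n+1})/x$, after which both halves of the equivalence and the absolute convergence claim drop out of that single formula.
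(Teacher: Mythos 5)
Your argument is correct, and it hinges on exactly the same closed form for the partial sums as the paper's proof: with $P_n=\prod_{k\le n}(\lambda_k-x)/\lambda_k$ you show $\sum_{n=0}^{k}P_n/\lambda_{n+1}=(1-P_{k+1})/x$, which is precisely the displayed identity in the paper's proof, and the equivalence plus the geometric-domination argument for the sufficient condition then go through identically. The only genuine difference is how that identity is derived. The paper unwinds the partial sum into a nested Horner-type expression
\[
	\tfrac{1}{\lambda_1}+\tfrac{\lambda_1-x}{\lambda_1}\Bigl(\tfrac{1}{\lambda_2}+\tfrac{\lambda_2-x}{\lambda_2}\bigl(\cdots\bigr)\Bigr)
\]
and collapses it by noting that each affine map $y\mapsto \tfrac1\lambda+\tfrac{\lambda-x}{\lambda}y$ fixes $1/x$ with multiplier $\tfrac{\lambda-x}{\lambda}$, so the composition contracts toward $1/x$ by the partial product. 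You instead observe the single recurrence $P_{n+1}=P_n-xP_n/\lambda_{n+1}$ and telescope. These are two verifications of the same finite identity; yours is shorter and requires no structural observation about the nesting, while the paper's version makes visible the dynamical picture (iterated affine contractions toward the fixed point $1/x$) that explains where the formula comes from. Either way, the proof is complete, including the implicit standing assumption that all $\lambda_n\neq 0$ (needed for the terms of the series to be defined), which both you and the paper use when bounding $1/|\lambda_n|$ by $(1+\theta)/|x|$.
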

	
		\begin{proof}
	We even have 
	\[
		{1 \over x} = \sum _{n\ge 0}  {(\lambda _1  - x)\ldots (\lambda _n - x)	\over
					\lambda _1 \ldots \lambda _n \, \lambda _{n+1}} + 
				{1 \over x} \prod _{n\ge 1} {\lambda _n - x	\over \lambda _n} ,
	\]
	provided that the sum or the product converges.

	Indeed, this follows from the following identity
	\[
		\sum _{0\le n\le k}  {(\lambda _1  - x)\ldots (\lambda _n - x)	\over
					\lambda _1 \ldots \lambda _n \, \lambda _{n+1}}
		= {1 \over x} - \left( \prod _{1\le n\le k+1} {\lambda _n - x	\over
				\lambda _n} \right) {1 \over x}.
	\]
	To arrive at this equality, observe that the left-hand side is equal to
	\[
		{1 \over \lambda _1 } + {\lambda _1  -x \over \lambda _1 } \left( {1 \over \lambda _2 } + {\lambda _2  -x \over \lambda _2 } \left(\ldots  \left(
		{1 \over \lambda _k} + {\lambda _k -x \over \lambda _k} \left( {1 \over \lambda _{k+1}} \right)\right) \ldots \right)\right).
	\]
	From this we obtain the right-hand side if we bear in mind that
	\[
		{1 \over \lambda _{k+1}} = {1\over x} - {\lambda _{k+1} -x \over \lambda _{k+1}}{1\over x}
	\]
	and that for $\lambda  \neq 0$ we have
	\[
		\left[\, y \longmapsto {1 \over \lambda } + {\lambda -x \over \lambda }y \,\right]
		\;=\;\left[\, {1 \over x} + h \longmapsto {1 \over x} + {\lambda -x\over \lambda } h \,\right].
	\]

	The second part of the lemma is obvious as $(\lambda _n)$ is bounded away from $0$ by
	at least $|x/2|$.
	\end{proof}

We are ready now  to tackle Theorem 1.
		\begin{proof}
	In essence, we proceed as in the standard proof of Laurant series expandability
	of holomorphic functions in annular domains. We begin with the Cauchy theorem
	for the cycle of curves $-\Gamma _1 +\Gamma _2 $:
	\[
		f(z) = {1 \over 2\pi i} \int _{\Gamma _2 } {f(\xi ) \over (\xi -z)} \, d\xi 
			- {1 \over 2\pi i} \int _{\Gamma _1 } {f(\xi ) \over (\xi -z)} \, d\xi .
		\qquad (z \in  D_2 \setminus \overline {D_1 })
	\]
	Then, by Lemma~1, we express $1/(\xi -z)$ firstly in terms of $\xi -c_n$ in $\Gamma _2 $ 
	\[
		{1\over \xi -z} = \sum _{n\ge 0} {(z-c_1 )\ldots (z-c_n)	\over
					(\xi -c_1 )\ldots (\xi -c_{n+1})}, \qquad	(\xi \in |\Gamma _2 |)
	\]
	and secondly in terms of $\xi -d_n$ in $\Gamma _1 $
	\[
		-{1\over \xi -z} = \sum _{n\ge 0} {(\xi -d_1 )\ldots (\xi -d_n)	\over
					(z-d_1 )\ldots (z-d_{n+1})}. \qquad	(\xi \in |\Gamma _1 |)
	\]
	Clearly, these series are locally uniformly absolutely convergent, so we can exchange integrating
	and summing to obtain the asserted expansion.
	
	\end{proof}

Observe that the representation of a holomorphic function as a series of form
\[
	\sum _{n\ge 1} {a_{-n} \over (z-d_1) \ldots  (z-d_n)} +  \sum _{n \ge 0} a_n \, (z-c_1 ) \ldots  (z-c_n)
\]
is not unique when some $d_n$ or $c_n$ are outside the domain of $z$-s in which we consider
such a representation.

For a holomorphic function  $f: G \tendsto  \C $ and any $n\ge 1$ we can define
a holomorphic function $\Delta ^{n-1} f: G^n \tendsto  \C $ by
\[
	\Delta ^{n-1} f (z_1 ,\ldots ,z_n) = {1 \over 2\pi i}\int _{\Gamma } {f(\xi ) \over (\xi -z_1 ) \ldots  (\xi -z_n) } \, d\xi ,
\]
where $\Gamma $ is any cycle of curves for which $\ind_\Gamma  (z_i) =1 $ and $\ind_\Gamma  (z) = 0$ for $z \in  \C \setminus G$.

		\begin{cor}
	Consider an open disc $D\subset \C $ with centre $c$ and a holomorphic function $f: D \tendsto  \C $.
	Consider also a sequence of complex numbers $(c_n)$ such that $c_n \in  D$ tends
	to $c$. Then the function $f$ has the following representation as the sum of a locally
	uniformly absolutely convergent series:
	\[
		f(z) = \sum _{n \ge 0} \Delta^n  f (c_1 ,\ldots ,c_{n+1}) \, (z-c_1 ) \ldots  (z-c_n).	\qquad (z \in  D)
	\]
	The coefficients are unique. \end{cor}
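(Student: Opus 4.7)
The plan is to imitate the $\Gamma_2$-half of the proof of Theorem~\ref{thm:main}: on a disc, Cauchy's formula already represents $f$ via a single contour integral, so no inner contour is needed. Fix $r$ with $r < R$ (where $R$ is the radius of $D$) and let $\Gamma_2 = \partial B(c, r)$. For $z \in B(c, r)$, Cauchy gives
\[
f(z) = \frac{1}{2\pi i}\int_{\Gamma_2} \frac{f(\xi)}{\xi - z}\, d\xi.
\]
I would then apply Lemma~1 with $\lambda_n = \xi - c_n$ and $x = \xi - z$; since $c_n \to c$ and $|\xi - c| = r > |z - c|$ on $\Gamma_2$, we have
\[
\frac{|\lambda_n - x|}{|\lambda_n|} \;=\; \frac{|z - c_n|}{|\xi - c_n|} \;\longrightarrow\; \frac{|z - c|}{r} \;<\; 1
\]
uniformly in $\xi \in \Gamma_2$, so Lemma~1 yields the uniformly absolutely convergent expansion $1/(\xi - z) = \sum_{n \ge 0}(z-c_1)\cdots(z-c_n)/[(\xi-c_1)\cdots(\xi-c_{n+1})]$. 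Exchanging sum and integral then produces the desired series with coefficients $a_n = \Delta^n f(c_1,\ldots,c_{n+1})$. Letting $r \uparrow R$ extends the identity to all of $D$, including $z = c$ (where the ratio estimate becomes $0 < 1$ and the argument still goes through).

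As an alternative, one could invoke Theorem~\ref{thm:main} directly on an annulus $B(c, r) \setminus \overline{B(c, \epsilon)}$ with $d = c$ and observe that each $a_{-n}$ vanishes: the integrand $f(\xi)(\xi - d_1)\cdots(\xi - d_{n-1})$ is holomorphic on $\overline{B(c, \epsilon)} \subset D$, so Cauchy's theorem kills the integral. Letting $\epsilon \downarrow 0$ recovers the expansion on $D \setminus \{c\}$; since the value at $z = c$ would still need a separate argument, this route seems less economical than the direct one.

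For uniqueness, assume $\sum_{n \ge 0} b_n (z-c_1)\cdots(z-c_n) \equiv 0$ on $D$ with locally uniform convergence. Evaluating at $z = c_1$ kills every term with $n \ge 1$ and forces $b_0 = 0$. The tail partial sums $S_N(z) = \sum_{n=1}^N b_n (z-c_2)\cdots(z-c_n)$ are polynomials converging locally uniformly to $0$ on $D \setminus \{c_1\}$; the maximum modulus principle, applied to $S_N - S_M$ on small closed discs centred at $c_1$, shows that $\{S_N\}$ is Cauchy uniformly across $c_1$ as well, so the convergence extends to $D$ with limit identically $0$. Evaluating at $c_2$ then gives $b_1 = 0$, and iterating peels off every coefficient. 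I expect this removable-singularity step to be the only subtle point; those who prefer can instead identify any such $b_n$ directly as $\Delta^n f(c_1,\ldots,c_{n+1})$ via the integral formula preceding the corollary.
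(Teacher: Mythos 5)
Your proposal is correct; both halves work, but each takes a route that differs from the paper's. For existence the paper is a one-liner: apply Theorem~1 on an exhausting sequence of discs and note that every $a_{-n}$ vanishes because the integrand $f(\xi)(\xi-d_1)\cdots(\xi-d_{n-1})$ is holomorphic across the inner disc --- exactly the argument you set aside as the ``alternative''. Your primary route (Cauchy's formula on a single outer circle plus Lemma~1) is the same mechanism with the inner contour deleted; it is cleaner in that it covers $z=c$ directly, a point the reduction to Theorem~1 reaches only after shrinking the inner disc and appealing to continuity, which the paper leaves tacit. One detail to make explicit: you must take $r$ large enough that \emph{all} the $c_n$ lie inside $\Gamma_2$ (only finitely many can fail to, so this is harmless), since otherwise $\tfrac{1}{2\pi i}\int_{\Gamma_2}f(\xi)\,[(\xi-c_1)\cdots(\xi-c_{n+1})]^{-1}\,d\xi$ misses some of the residues and is not $\Delta^n f(c_1,\ldots,c_{n+1})$; ``letting $r\uparrow R$'' by itself does not secure this, and it is precisely the paper's hypothesis that each disc of the exhaustion contains all of the $c_n$-s. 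For uniqueness the two arguments genuinely diverge: the paper cites property~7) (the divided difference as the leading coefficient of $f$ modulo $(z-c_1)\cdots(z-c_{n+1})$), while you evaluate at $c_1$, divide out $(z-c_1)$, tame the removable singularity of the partial sums with the maximum principle, and iterate. Your argument is complete as sketched; note that unpacking the paper's citation of 7) runs into the very divisibility-of-the-tail issue you flag, unless one instead applies the functional $g\mapsto\Delta^n g(c_1,\ldots,c_{n+1})$ term by term to the series --- which is your own parenthetical fallback, and arguably the cleanest uniqueness proof of all.
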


		\begin{proof}
	One only has to apply the Cauchy theorem to Theorem 1 for a sequence of open discs such that their
	closures have $D$ as their union and each of which contains all of $c_n$-s. The uniqueness follows
	from property~7) below. 	\end{proof}

We revert now to the general domain $G$.
We list several simple properties of the function~$\Delta^n $
	\begin{enumerate}
\item  $f \mapsto \Delta ^n f (c_1 ,\ldots ,c_{n+1})$ is a linear functional.
\item	For any permutation $\sigma $ of $\{1,\ldots ,n\}$ we have
	\[
		\Delta ^{n-1} f(c_{\sigma (1)},\ldots ,c_{\sigma (n)}) =\Delta ^{n-1} f(c_1 ,\ldots ,c_n).
	\]
\item	$\Delta ^n f (c,\ldots ,c) = {f^{(n)}(c) \over n!}$ (in particular $\Delta^0 f = f$).
\item	If $c_1 ,\ldots ,c_n$ are all distinct then
	\begin{align*}
		\Delta ^{n-1} f(c_1,\ldots ,c_n) &=
		{\sum _{1\le k\le n} {f(c_k) \over (c_k-c_1)\ldots \widehat{(c_k-c_k)}\ldots (c_k-c_n)}}\\
		&={\sum _{1\le k\le n} {f(c_k) \over (\,z \mapsto (z-c_1 )\ldots (z-c_n)\,)'(c_k)}}.
	\end{align*}
\item	$\Delta ^n f (c_1 ,\ldots ,c_{n+1}) = \Delta ^k\left[ \Delta ^{n-k} f(c_1 ,\ldots ,c_{n-k},\bullet )\right] (c_{n-k+1},\ldots ,c_{n+1})$.
\item \begin{multline*}
		\Delta ^n f (c_1 ,\ldots ,c_{n+1}) =\\
		=\begin{cases} {\Delta ^{n-1} f (c_1 ,\ldots ,c_{n-1},c_{n+1}) - \Delta ^{n-1} f (c_1 ,\ldots ,c_{n-1},c_n)
				\over c_{n+1}-c_n}, &\text{when } c_n\neq c_{n+1};\\
		\left[ \Delta ^{n-1} f (c_1 ,\ldots ,c_{n-1},\bullet ) \right]'(c_n) ,&\text{when } c_n=c_{n+1}.\end{cases}
	\end{multline*}
\item	In the algebra of functions holomorphic on $G$, the value
$\Delta ^n f (c_1 ,\ldots ,c_{n+1})$ is the leading coefficient of
\[
	f(z)\,\text{ \rm{mod} } (z-c_1 )\ldots (z-c_{n+1}),
\]
that is to say if for a holomorphic function g we have
\[
	f(z) = g(z) (z-c_1)\ldots(z-c_{n+1}) + a_n z^n + \text{lower order terms},
\]
say 
\[
	f(z)=a_0 + a_1(z-c_1) + \cdots + a_n(z-c_1)\ldots(z-c_n) + g(z) (z-c_1)\ldots(z-c_{n+1})
\]
then $\Delta ^n f (c_1 ,\ldots ,c_{n+1})=a_n$.
\end{enumerate}
		\begin{proof}
1,2) need no explanation, 3) is a well-known fact, 4) follows from the residue theorem.

The only not-so-trivial property is 5). In case when $c_i$ are all distinct it follows from 4) by $n-k$ substitutions $s=n-k$, $a_q = c_i-c_{n-k+q}$,
for each $i$  from $\{1,\ldots ,n-k\}$
in the following readily-obtainable equality for distinct nonzero $a_1,\ldots ,a_s$
\[
	{1/a_1 \over (a_2-a_1)\ldots (a_s-a_1)} + \ldots  + {1/a_s \over (a_1-a_s)\ldots (a_{s-1}-a_s)}
	= {1\over a_1\ldots a_s}.
\]
(This equality can be proved in a few ways. A. By the residue theorem and the following equality
\[
	\int _\Gamma  {d\xi  \over \xi (\xi -a_1)\ldots (\xi -a_s)} =0,	\quad\text{for large circles $\Gamma $.}
\]
B. By combining 4) for $f(z)=1/z$, Lemma~1 and the uniqueness of decomposition in Collorary~1, or C. Simply by induction.)
The general case follows by the identity theorem in $G^{n+1}$.

Now, property 6) can be obtained by combining 5) with 4), or 3) depending on case. Finally, one
more use of the residue theorem gives 7).
\end{proof}

Motivated by 5) above, we call $\Delta ^n f (c_1 ,\ldots ,c_{n+1})$ the \defem{$n$-th devided difference} of $f$
at points $c_1 ,\ldots ,c_{n+1}$.

\section {Is this a known result?}
In the few sources on interpolation I had the occasion and opportunity
to browse through I haven't actually recognized this result.  However, because of quite a few similar (yet not quite there) theorems I thought it must be known and haven't tried to publish it. 

Now, after a few years of not having anything to do with complex analysis, hoping that at least the method of proof has some new elements to it,  I am placing it on arXiv with the expectation that
someone more knowledgable on the subject will provide me with some valuable feedback.

There is a vast room for improvement in the statement of the theorem and in all the theory
that might surround it. But there would be no point in going into it if it all had been considered before.

%Finally, I apologize for the lack of any references, I will wait for feedback first.
\vskip 1ex

\end{document}